\newtheorem{theorem}{Theorem}
\newtheorem{proposition}[theorem]{Proposition}
\newtheorem{lemma}[theorem]{Lemma}
\newtheorem{conjecture}[theorem]{Conjecture}
\newtheorem{definition}{Definition}
\newcommand{\Ov}{\mathscr{O}}
\newcommand{\al}{\alpha}
\newcommand{\la}{\lambda}
\newcommand{\E}{\mathbb{E}}
\newcommand{\R}{\mathbb{R}}
\newcommand{\C}{\mathbb{C}}
\newcommand{\ovr}{\overline}
\newcommand{\dd}{\mathrm{d}}
\newcommand{\OO}{\mathrm{O}}
\newcommand{\CUE}{\mathrm{CUE}}
\newcommand{\TUE}{\mathrm{TUE}}
\newcommand{\disteq}{\stackrel{d}{=}}
\newcommand{\be}{\begin{equation}}
\newcommand{\ee}{\end{equation}}
\title[Explicit formulas concerning eigenvectors
of weakly non-unitary matrices]{Explicit formulas concerning eigenvectors \\
of weakly non-unitary matrices}
\author{Guillaume Dubach}
\address{DMA, École Normale Supérieure -- PSL, 45 rue d'Ulm, F-75230 Cedex 5 Paris, France}
\email{guillaume.dubach@ens.fr}
\date{}
\begin{document}

\begin{abstract}
We investigate eigenvector statistics of the Truncated Unitary ensemble $\TUE(N,M)$ in the weakly non-unitary case $M=1$, that is when only one row and column are removed. We provide an explicit description of generalized overlaps as deterministic functions of the eigenvalues, as well as a method to derive an exact formula for the expectation of diagonal overlaps (or squared eigenvalue condition numbers), conditionally on one eigenvalue. This complements recent results obtained in the opposite regime when $M \geq N$, suggesting possible extensions to $\TUE(N,M)$ for all values of $M$.
\end{abstract}

\keywords{Overlaps between eigenvectors; Truncated Unitary matrices}
\subjclass{60B20, 15B52}

\maketitle


\begin{figure}[h!]
\includegraphics[width=\textwidth]{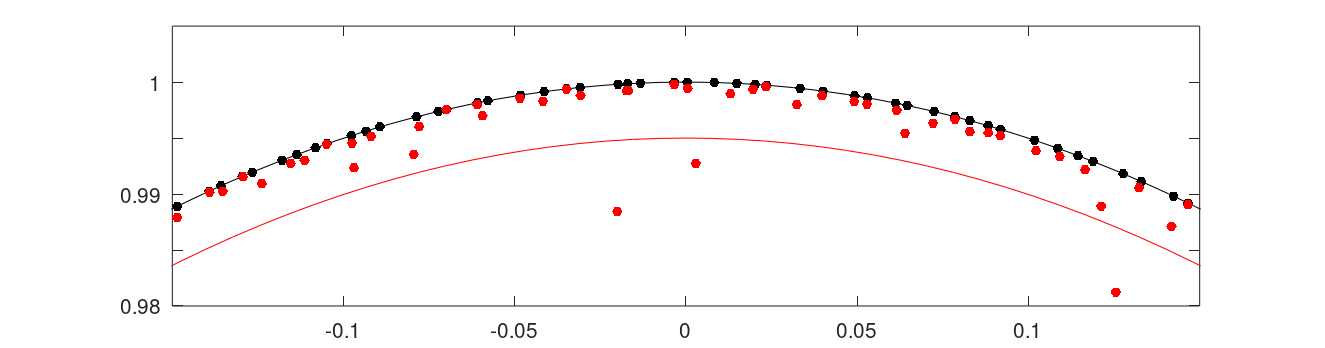}
\caption{Eigenvalues of a $\CUE(N+1)$ matrix (in black) and one of its $\TUE(N,1)$ truncations (in red), for $N=1000$. The smaller red circle is at distance $5/N$ from the unit circle.}
\label{fig:tue1}
\end{figure}


\section{Introduction}
\subsection*{General Presentation}
The ensemble $\TUE(N,M)$ of random matrices is defined as the distribution of an $N \times N$ submatrix of a unitary matrix $U$ of size $(N+M)\times(N+M)$, distributed according to the Haar measure on the unitary group. Eigenvalues of $\TUE(N,M)$ form a determinantal point process \cites{ZyczkowskiSommers, ForresterKrishnapur}, and are therefore well understood for all values of $M$; in particular, eigenvalues are almost surely distinct, and strictly inside the unit disk, their square radii being distributed as independent beta variables (Kostlan's theorem). Their associated eigenvectors are also, in principle, integrable; some aspects of their statistics have been studied recently \cite{DubachSpherical}, but for technical reasons these results only hold under the condition that $M \geq N$, i.e. when at least half of the lines and columns are truncated away from the original unitary matrix $U$. The present work is concerned with the opposite regime, namely the weakly non-unitary case $M=1$, when only one row and one column are removed from $U$. In this regime, the typical distance to the unit circle is of order $1/N$, as illustrated on Figure \ref{fig:tue1}. 
\medskip

Non-Hermitian matrices are endowed with left eigenvectors as well as right eigenvec\-tors, that can be chosen so as to form a bi-orthogonal family. Homogeneous quantities involving left and right eigenvectors are of particular interest; the particular quantities we study here are the generalized eigenvector overlaps, defined as follows:

\begin{definition}
If $L_i,R_i$ are left and right eigenvectors\footnote{We adopt the convention that $L_i=\langle L_i|$ is a row vector and $R_j = |R_j \rangle$ is a column vector, so that \eqref{def:biorthogonal} could be equivalently written $ L_i R_j = \delta_{ij} $ with the standard inner product, $\langle L_i | L_j \rangle $ is the same as $ L_i L_j^*$, etc.} of a matrix $G$ with distinct eigenvalues, normalized so as to form a bi-orthogonal family, that is, for all $i,j$,
\begin{equation}\label{def:biorthogonal}
\langle L_i \ | \ R_j \rangle = \delta_{ij}
\end{equation}
then for any integer $q \geq 1$ and $2q$-tuple of indices $i_1, j_1, \dots, i_q, j_q$, the corresponding generalized overlap, or $q$-overlap, is defined as the product
\begin{equation}
    \Ov_{i_1 j_1 \dots i_q j_q} 
    := \prod_{l=1}^q \langle L_{i_l} | L_{j_l} \rangle  \langle R_{j_l} | R_{i_{l+1}} \rangle,
\end{equation}
with the convention that $i_{q+1} = i_1$.
\end{definition}

These $q$-overlaps were introduced and studied in the complex Ginibre case by Crawford \& Rosenthal \cite{CrawfordRosenthal}. The case $q=1$, i.e. the matrix of overlaps $\Ov = (\Ov_{ij})$, is of particular relevance and has been the subject of much work in recent years, following the seminal papers of Chalker and Mehlig \cites{ChalkerMehlig, MehligChalker1, MehligChalker2}. Most importantly, the diagonal overlaps $\Ov_{ii} = \| L_i \|^2 \|R_i \|^2$, which are also the squared eigenvalue condition numbers, are of interest to both the mathematics and physics literature, see for instance \cites{BourgadeDubach, BelinschiNowakTarnowski, Fyodorov, FyodorovMehlig, FyodorovTarnowski, NowakTarnowski}.

The main goal of this paper is to establish that for $\TUE(N,M)$ with $M=1$, $q$-overlaps are given by deterministic functions of the eigenvalues, with no extra randomness involved. A similar situation was brought to light, for $q=1$, by Fyodorov \& Mehlig \cite{FyodorovMehlig} in the case of weakly non-Hermitian matrices, which exhibits some formal (but not complete) analogy with the present weakly non-unitary case. As a consequence, the study of $q$-overlaps in this context is reduced to questions concerning the eigenvalues of $\TUE(N,1)$, a well-understood determinantal point process, which we illustrate by providing an exact expression for the expectation of diagonal overlaps $\Ov_{ii}$.


\subsection*{Results}
Section \ref{gen_ov_sec} presents the derivation of the following explicit formula, for the generalized $q$-overlaps between the eigenvectors of a $\TUE(N,1)$ matrix, in terms of its eigenvalues.

\begin{theorem}\label{thm_overlaps_intro}
If $G$ is an $N \times N$ submatrix of a unitary matrix of size $(N+1)\times (N+1)$, with  distinct eigenvalues $\{ \la_1, \dots, \la_N \}$, then the $q$-overlaps between the eigenvectors of $G$ are given by
\begin{equation}\label{generalized_overlaps_formula}
\Ov_{i_1 j_1 \dots i_q j_q} = 
\prod_{l=1}^q \left( \frac{(1-|\la_{i_l}|^2)(1-|\la_{j_l}|^2)}{(1-\la_{i_l}\ovr{\la_{j_{l}}})(1-\la_{i_{l+1}}\ovr{\la_{j_l}})}
    \prod_{k \neq i_l} \frac{ \la_{i_l} \ovr{\la_k} - 1}
    { \la_{i_l} - \la_k }
    \prod_{k \neq j_l} \frac{ \ovr{\la_{j_l}} {\la_k} - 1}
    { \ovr{\la_{j_l} - \la_k} } \right).
\end{equation}
\end{theorem}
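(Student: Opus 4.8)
The plan is to exploit that $G$ is a corank-one truncation of a unitary matrix. After permuting rows and columns we may write the ambient matrix as
\[
U=\begin{pmatrix} G & b\\ c^{*} & d\end{pmatrix},\qquad b,c\in\C^{N},\ d\in\C,
\]
and unitarity of $U$ becomes the identities $I_{N}-G^{*}G=cc^{*}$, $I_{N}-GG^{*}=bb^{*}$, $Gc=-\overline{d}\,b$, $G^{*}b=-dc$, and $\|b\|^{2}=\|c\|^{2}=1-|d|^{2}$. The key point is that the defect of $G$ is rank one on \emph{both} sides, which forces the eigenvectors to be rigid.

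First I would reduce the $q$-overlap to a product, over the eigenvalues, of a single scalar per index. Write $R_{i},L_{i}$ for the bi-orthogonal eigenvectors, so $GR_{i}=\la_{i}R_{i}$ and $G^{*}L_{i}=\overline{\la_{i}}L_{i}$, and set $\gamma_{i}:=\langle c|R_{i}\rangle$, $\beta_{i}:=\langle b|L_{i}\rangle$. Sandwiching $I_{N}-G^{*}G=cc^{*}$ between $\langle R_{a}|$ and $|R_{b}\rangle$ and using the eigenvalue equations yields $\langle R_{a}|R_{b}\rangle(1-\overline{\la_{a}}\la_{b})=\overline{\gamma_{a}}\gamma_{b}$, and symmetrically $\langle L_{a}|L_{b}\rangle(1-\la_{a}\overline{\la_{b}})=\overline{\beta_{a}}\beta_{b}$. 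Substituting these into the definition of $\Ov_{i_{1}j_{1}\dots i_{q}j_{q}}$ and using $i_{q+1}=i_{1}$, every $\beta$ and $\gamma$ regroups, index by index, into the single scalar $\eta_{i}:=\overline{\beta_{i}}\gamma_{i}=\langle L_{i}|b\rangle\langle c|R_{i}\rangle$, which — unlike $\beta_{i},\gamma_{i}$ separately — is invariant under the rescaling freedom of the bi-orthogonal family, giving
\[
\Ov_{i_{1}j_{1}\dots i_{q}j_{q}}=\prod_{l=1}^{q}\frac{\eta_{i_{l}}\,\overline{\eta_{j_{l}}}}{(1-\la_{i_{l}}\overline{\la_{j_{l}}})\,(1-\la_{i_{l+1}}\overline{\la_{j_{l}}})}.
\]
Comparing with \eqref{generalized_overlaps_formula}, the theorem reduces to the identity $\eta_{i}=\omega\,(1-|\la_{i}|^{2})\prod_{k\neq i}\frac{\la_{i}\overline{\la_{k}}-1}{\la_{i}-\la_{k}}$ with $\omega$ a unit constant independent of $i$: any such global phase cancels in the product above, since in a $q$-overlap each $\eta$ is paired with a conjugated $\eta$.

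To pin down $\eta_{i}$ I would recognize it as a residue. The Schur complement of $zI_{N}-G$ in $zI_{N+1}-U$ gives $\langle c|(zI_{N}-G)^{-1}b\rangle=(z-d)-\det(zI_{N+1}-U)/\det(zI_{N}-G)$; expanding the left-hand side in the eigenbasis of $G$, its residue at $z=\la_{i}$ is exactly $\gamma_{i}\overline{\beta_{i}}=\eta_{i}$, so $\eta_{i}=-\det(\la_{i}I_{N+1}-U)\big/\prod_{k\neq i}(\la_{i}-\la_{k})$. Now $\la_{i}I_{N+1}-U=-U(I_{N+1}-\la_{i}U^{*})$, and the complementary Schur complement applied to $\det(I_{N+1}-\la_{i}U^{*})$ with respect to its $N\times N$ block $I_{N}-\la_{i}G^{*}$ produces the scalar factor $1-\la_{i}\overline{d}-\la_{i}^{2}\langle b|(I_{N}-\la_{i}G^{*})^{-1}c\rangle$; here the rank-one identities finish the job, since $(I_{N}-\la_{i}G^{*})R_{i}=\gamma_{i}c$ gives $\langle b|(I_{N}-\la_{i}G^{*})^{-1}c\rangle=\langle b|R_{i}\rangle/\gamma_{i}$, and combining $I_{N}-G^{*}G=cc^{*}$ with $Gc=-\overline{d}\,b$ yields $\langle b|R_{i}\rangle=-(\overline{d}/\la_{i})\gamma_{i}$, so that scalar factor collapses to $1$. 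Hence $\det(I_{N+1}-\la_{i}U^{*})=\det(I_{N}-\la_{i}G^{*})=\prod_{k}(1-\la_{i}\overline{\la_{k}})$, which after reinstating $\det(-U)$ gives $\eta_{i}=-\det(U)\,(1-|\la_{i}|^{2})\prod_{k\neq i}\frac{\la_{i}\overline{\la_{k}}-1}{\la_{i}-\la_{k}}$ — of the required form with $\omega=-\det(U)$ — and \eqref{generalized_overlaps_formula} follows.

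I expect the main obstacle to be this last step: carrying out the two nested Schur decompositions together with the repeated use of the rank-one relations cleanly enough that the unwanted scalar factor genuinely collapses to $1$, and checking that the leftover $\det(U)$-phase really is independent of $i$ and therefore invisible in any $q$-overlap. A minor technical point is that several manipulations assume $\la_{i}\neq0$, equivalently $d\neq0$; the configurations with a vanishing eigenvalue form a lower-dimensional set, and since both sides of \eqref{generalized_overlaps_formula} are continuous on the locus of corank-one unitary truncations with simple spectrum, the identity extends to them by continuity.
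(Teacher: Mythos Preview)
Your argument is correct and takes a genuinely different route from the paper. The paper passes to a Schur form $T$ of $G$, uses the rank-one relation $TT^{*}=I-vv^{*}$ to derive recursively the entries of $T$, then the entries of the left and right eigenvectors, then closed formulas for each individual inner product $\langle L_{i}\,|\,L_{j}\rangle$ and $\langle R_{j}\,|\,R_{i}\rangle$ (its Propositions~\ref{prop:eigenvectors_entries} and~\ref{scalar_products}), and only at the end assembles these into the $q$-overlap. You bypass all of this: sandwiching the rank-one identities between eigenvectors gives $\langle L_{a}|L_{b}\rangle$ and $\langle R_{a}|R_{b}\rangle$ in one line in terms of the scalars $\beta_{i},\gamma_{i}$, and the cyclic structure of the $q$-overlap collapses everything to the single invariant $\eta_{i}=\overline{\beta_{i}}\gamma_{i}$, which you then identify via a resolvent residue and two Schur complements. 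The trade-off is that the paper's longer computation yields the individual inner products \eqref{LiLj}--\eqref{RjRi} as a byproduct (potentially useful on their own), whereas your argument is coordinate-free, does not depend on a choice of Schur form, and makes transparent why only the combination $\eta_{i}$, and not $\beta_{i},\gamma_{i}$ separately, can be a function of the spectrum. Your caveats are well placed: the step $\langle b|R_{i}\rangle=-(\overline{d}/\la_{i})\gamma_{i}$ and the division by $\gamma_{i}$ require $\la_{i}\neq 0$ and $\gamma_{i}\neq 0$; the latter follows from $(1-|\la_{i}|^{2})\|R_{i}\|^{2}=|\gamma_{i}|^{2}$ once $|\la_{i}|<1$, and the remaining boundary cases are handled by the continuity argument you sketch.
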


In the simplest case ($q=1$, $i=j$), this formula takes a form which can be explicitly integrated for a large class of determinantal point process with radially symmetric potential $V$. We present this general computation in Section \ref{computation_exp_sec}, and deduce from it the explicit formula for the expectation of the diagonal overlap $\Ov_{ii}$, conditionally on the location of the corresponding eigenvalue $\la_i$ in the unit disk\footnote{ Note that the eigenvalues $\lambda_1, \dots, \lambda_N$ are given in exchangeable random order, so that the conditioning only means, informally, that there is an eigenvalue at location $z_1$.}.

\begin{theorem}\label{thm:expO11}
If $G$ is distributed according to $\TUE(N,1)$, then the expectation of its diagonal overlap $\Ov_{11}$, conditionally on $\{ \la_1 = z_1 \}$ with $|z_1|<1$, is given by
\begin{equation}
\E_{\la_1=z_1} \Ov_{11} = \frac{(N+1)(1-|z_1|^2) + |z_1|^{2(N+1)} - 1}{ 1- |z_1|^{2(N+1)} - (N+1)(1-|z_1|^2) |z_1|^{2 N} }.
\end{equation}
In particular, if $|z_1| = r_1 < 1$ is fixed, then
\be
\frac{1}{N} \E_{|\la_1|=r_1} \Ov_{11} \xrightarrow[N \rightarrow \infty]{} 1- r_1^2 
\ee
and if $ \la_1 = z_1^{(N)}$ such that $1- |z_1^{(N)}|^2 \sim \kappa /N $,  then
\begin{equation}\label{univ_limit}
    \E_{ \la_1 = z_1^{(N)} } \Ov_{11}  \xrightarrow[N \rightarrow \infty]{} \frac{1- (1-\kappa) e^{\kappa}}{ e^{\kappa} -1 -\kappa }.
\end{equation}
\end{theorem}

Remarkably, \eqref{univ_limit} coincides with the corresponding limit for weakly non-Hermitian matrices in the appropriate regime, which follows from formula (4) and (5) in \cite{FyodorovMehlig}.


\subsection*{Conjecture and open questions}

Theorem \ref{thm_overlaps_intro} constitutes further supporting evidence for the following general conjecture:
\begin{conjecture}\label{conj_overlaps_all_ranks}
If $G$ is distributed according to $\TUE(N,M)$, the distribution of its diagonal overlaps is given by:
\begin{equation}
\Ov_{ii} \disteq \prod_{k \neq i} \left(1- \frac{ (\la_i - \overline{\la_i}) (\la_k - \overline{\la_k})}{|\la_i - \la_k|^2} Y^{(k)} \right)
\end{equation}
where $Y^{(k)}$ are i.i.d. real random variables following a $\beta_{1,M-1}$ distribution.
\end{conjecture}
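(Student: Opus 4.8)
The plan is to establish the conjecture in the form of a conditional law: given the eigenvalue configuration $\{\la_1,\dots,\la_N\}$, the diagonal overlap satisfies $\Ov_{ii}\disteq\prod_{k\neq i}\bigl(1-c_{ik}\,Y^{(k)}\bigr)$, where $c_{ik}=\frac{(\la_i-\ovr{\la_i})(\la_k-\ovr{\la_k})}{|\la_i-\la_k|^2}$ is now deterministic and the $Y^{(k)}$ are i.i.d. $\beta_{1,M-1}$ variables independent of the eigenvalues. This is equivalent to the stated identity, since there the coefficients $c_{ik}$ are functions of the same eigenvalues while the $Y^{(k)}$ carry independent extra randomness. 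First I would realize $G\sim\TUE(N,M)$ as the compression of a Haar unitary $U$ on $\C^{N+M}$ to the first $N$ coordinates, and record the rank-$M$ defect relations $I-G^*G=C^*C$ and $I-GG^*=BB^*$, where $B$ (size $N\times M$) and $C$ (size $M\times N$) are the off-diagonal blocks of $U$. These blocks are the data that replace, for general $M$, the single pair of vectors used to prove Theorem \ref{thm_overlaps_intro} when $M=1$.

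Next I would write the diagonal overlap through the spectral projector, $\Ov_{ii}=\|R_iL_i^*\|_{\mathrm{HS}}^2$, and re-express it using the rank-$M$ structure. Generalizing the explicit eigenvector computation behind Theorem \ref{thm_overlaps_intro}, the eigenvalues $\la_k$ arise as the zeros inside the disk of an $M\times M$ matrix inner function built from $(B,C,D)$, and each eigenvector is carried by an associated residue vector $v_k\in\C^M$ living in the removed block; the overlap should then organize itself into a product over $k\neq i$ of pairwise factors depending on $(\la_i,\la_k,v_i,v_k)$, with the deterministic kernel $c_{ik}$ emerging as the geometric coupling and the vectors $v_k$ carrying all the remaining randomness.

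The heart of the argument is the conditional law of $\{v_k\}$ given $\{\la_k\}$. Pushing the Haar measure on $U$ through the Schur-type change of variables $U\mapsto(\{\la_k\},\{v_k\},\text{residual phases})$ should make it factor so that, conditionally on the eigenvalues, each normalized weight $Y^{(k)}=|\langle v_k\,|\,w\rangle|^2/\|v_k\|^2$ along a fixed reference direction $w\in\C^M$ is $\beta_{1,M-1}$-distributed, which is precisely the law of one coordinate weight of a uniform unit vector of $\C^M$, the split ``$1$ versus $M-1$'' reflecting the single eigenvalue constraint imposed against the $M$ removed directions. I expect the genuinely hard step to be the mutual independence of the $Y^{(k)}$ after conditioning: this is the $\TUE$ analogue of the (already nontrivial) independence of the strictly upper-triangular Schur entries in the Ginibre ensemble exploited in \cite{BourgadeDubach}, and the residue vectors $v_k$ are a priori correlated. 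Because a direct distributional proof of this independence is delicate, I would run in parallel a moment-matching route: conditionally on the eigenvalues the factors $(1-c_{ik}Y^{(k)})$ are bounded, so the law is determined by its integer moments, and it suffices to compute $\E[\Ov_{ii}^p\mid\{\la_k\}]$ from the determinantal structure and check that it equals the prediction $\prod_{k\neq i}\E[(1-c_{ik}Y)^p]$ with $Y\sim\beta_{1,M-1}$, whose moments are explicit; agreement for all $p$ would establish the marginal beta law and the independence simultaneously.

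Finally I would validate the formula at the two boundaries where it can be cross-checked. At $M=1$ the distribution $\beta_{1,0}$ degenerates, collapsing the product to a deterministic function of the eigenvalues, which must be compared against the product-over-pairs structure of Theorem \ref{thm_overlaps_intro}; at $M\geq N$ the formula should reproduce the overlap statistics of \cite{DubachSpherical}. Matching these two regimes would simultaneously fix the normalization of $c_{ik}$ and pin down the parameters of the beta law, completing the interpolation across all values of $M$.
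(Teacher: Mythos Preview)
The statement you are addressing is labeled a \emph{Conjecture} in the paper, and the paper does not prove it. The only evidence offered is that the two boundary cases are known: $M=1$ follows from Theorem~\ref{thm_overlaps_intro} (equation \eqref{diag_overlap}), and $M\geq N$ is cited from \cite{DubachSpherical}. There is therefore no ``paper's own proof'' to compare against; your proposal is a research plan for an open problem, not a proof to be checked.

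As a plan it is reasonable in outline, but the gap is exactly where you locate it yourself. Two of your steps are not established and together constitute the entire content of the conjecture: (i) that for general $M$ the diagonal overlap factorizes as a product over $k\neq i$ of pairwise terms of the form $1-c_{ik}Y^{(k)}$, and (ii) that the residual variables $Y^{(k)}$ are conditionally i.i.d.\ $\beta_{1,M-1}$. Neither the Schur change of variables nor the moment-matching route you sketch is carried out; you describe what the answer should look like and which known mechanisms might produce it, but no argument is given that the Haar measure actually factorizes in the required way for $1<M<N$, nor that the conditional moments $\E[\Ov_{ii}^p\mid\{\la_k\}]$ split as the product $\prod_{k\neq i}\E[(1-c_{ik}Y)^p]$. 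These are precisely the points the paper leaves open. Your boundary checks at $M=1$ and $M\geq N$ are consistency checks already recorded in the paper and do not advance the intermediate range. In short: what you have written is a plausible strategy, but it does not close the conjecture, and the paper does not either.
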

Conjecture \ref{conj_overlaps_all_ranks} is known to be true for $M \geq N$ by \cite{DubachSpherical}, and for $M=1$ by Theorem~\ref{thm_overlaps_intro}, with the usual convention that $\beta_{1,0}$ is a Dirac with deterministic value $1$. 
Another question, in the cases where such decomposition is known to hold, would be to describe the limit of diagonal overlaps, and more generally of $q$-overlaps, when properly rescaled. It is to be expected that the answer to this question would exhibit a greater dependence on the parameter $M$, due to the important transition in the typical behavior of eigenvalues between the regimes where $M=\OO(1)$ and those with $M \geq N$. \medskip

\noindent \textit{Notation:} if $v$ is a vector, $v^{(k)}$ is the vector of its coordinates from $1$ to $k$, and $v^{[k]}$ the vector of its coordinates from $k$ to $n$.

\section{Generalized overlaps as functions of the eigenvalues}\label{gen_ov_sec}

The goal of this section is to establish Theorem \ref{thm_overlaps_intro}, a deterministic fact which follows from a series of linear algebra lemmata. The first lemma gives a formula for each column of an upper-triangular matrix $T$ such that $I-TT^*$ is a rank one positive operator.

\begin{lemma}
If $T$ is an $m \times m$ upper-triangular matrix with eigenvalues $ \la_1= T_{11}$, $\dots$, $\la_m = T_{mm} $, and  $v$ a column vector such that $TT^* = I_m - vv^*$, then for all $k=1, \dots, m$,
\begin{equation}
    |v_k|^2 = (1-|\la_k|^2) \prod_{l > k} |\la_l|^2
\end{equation}
and the $k$th column of $T$ is given by
\begin{equation}\label{columns_of_T}
\left(\begin{array}{c}
    \tau_k \\
    \la_k \\
    0
\end{array}\right)
\qquad \text{where} \qquad
    \tau_k 
    = - \frac{ \ovr{v_k} }{ \ovr{\la_k} \prod_{l>k} |\la_l|^2 } v^{(k-1)}
    = \frac{ |\la_k|^2-1 }{ \ovr{\la_k} } \frac{1}{v_k} v^{(k-1)}.
\end{equation}
\end{lemma}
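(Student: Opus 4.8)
The plan is to argue by induction on the matrix size $m$, peeling off the last row and column at each step; the formulas then drop out of a block computation that preserves the rank-one structure of $I-TT^*$. The base case $m=1$ is immediate: $TT^*=|\la_1|^2=1-|v_1|^2$ forces $|v_1|^2=1-|\la_1|^2$ (the asserted identity, with an empty product), and the single column is simply $(\la_1)$ with $\tau_1$ empty.

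For the inductive step I would write $T=\begin{pmatrix}\wt T & \tau_m\\ 0 & \la_m\end{pmatrix}$, where $\wt T$ is the leading $(m-1)\times(m-1)$ block --- upper-triangular with diagonal $\la_1,\dots,\la_{m-1}$ --- and $\tau_m$ is the part of the last column above the diagonal; correspondingly split $v=\binom{\wt v}{v_m}$ with $\wt v=v^{(m-1)}$. Expanding $TT^*=I_m-vv^*$ block by block produces three relations. The $(m,m)$ entry gives $|\la_m|^2=1-|v_m|^2$. The last column (rows $1$ through $m-1$) gives $\ovr{\la_m}\,\tau_m=-\ovr{v_m}\,\wt v$, i.e. $\tau_m=-\tfrac{\ovr{v_m}}{\ovr{\la_m}}\wt v$, which is exactly the claimed formula for $k=m$ (the two displayed expressions coinciding because $-\ovr{v_m}/\ovr{\la_m}=-|v_m|^2/(\ovr{\la_m}v_m)=(|\la_m|^2-1)/(\ovr{\la_m}v_m)$ by the first relation). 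The leading $(m-1)\times(m-1)$ block gives $\wt T\wt T^*+\tau_m\tau_m^*=I_{m-1}-\wt v\wt v^*$.

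The crucial observation is that this last relation again has the required form. Using $\tau_m\tau_m^*=\tfrac{|v_m|^2}{|\la_m|^2}\,\wt v\wt v^*$ together with the scalar identity $|\la_m|^2+|v_m|^2=1$, one gets
\be
\wt T\wt T^*=I_{m-1}-\Bigl(1+\tfrac{|v_m|^2}{|\la_m|^2}\Bigr)\wt v\wt v^*=I_{m-1}-\wt v'(\wt v')^*,\qquad \wt v':=\wt v/|\la_m|,
\ee
so the induction hypothesis applies to the pair $(\wt T,\wt v')$. (A vector $v$ with $TT^*=I-vv^*$ is only determined up to a global phase, but the asserted formulas are invariant under $v\mapsto e^{i\theta}v$, so the particular representative $\wt v'$ does not matter.) For $k\le m-1$, the $k$th column of $T$ is the $k$th column of $\wt T$ with a zero appended; reading off the conclusion for $\wt T$ and tracking the factors of $|\la_m|$ introduced by the rescaling $\wt v'=\wt v/|\la_m|$ turns $\prod_{m-1\ge l>k}|\la_l|^2$ into $\prod_{m\ge l>k}|\la_l|^2$ in both the formula for $|v_k|^2$ and the formula for $\tau_k$, which is precisely the statement for $T$. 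The second expression for $\tau_k$ then follows from the first and the identity for $|v_k|^2$ just proved, via $\ovr{v_k}/\prod_{l>k}|\la_l|^2=|v_k|^2/\bigl(v_k\prod_{l>k}|\la_l|^2\bigr)=(1-|\la_k|^2)/v_k$.

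The only genuine obstacle is the block computation of the third relation --- verifying that $\wt T\wt T^*$ is again $I$ minus a rank-one positive operator, with the rescaled vector $\wt v/|\la_m|$ --- and this hinges entirely on $|\la_m|^2+|v_m|^2=1$, which comes for free from the $(m,m)$ entry; everything else is bookkeeping of indices and partial products. One should note that the displayed expressions for $\tau_k$ presuppose $\la_k\neq 0$ (and hence $v_k\neq 0$ whenever $|\la_k|<1$), which is the generic situation and holds almost surely for $\TUE(N,1)$; the identity for $|v_k|^2$ holds with no restriction.
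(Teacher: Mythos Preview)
Your proof is correct and follows essentially the same induction as the paper: peel off the last row and column, read off $|v_m|^2$ and $\tau_m$ from the $(m,m)$ and off-diagonal blocks, and observe that the leading block satisfies the same hypothesis with $v^{(m-1)}/|\la_m|$ in place of $v$. Your added remarks on phase invariance and the implicit assumption $\la_k\neq 0$ are apt and slightly more explicit than the paper's version.
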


\begin{proof}
We proceed by induction on $m$; the statement holds trivially when $m=1$. Let us assume it is true for matrices of size $m-1$, and consider the following matrix of size $m$:
$$
T_m=
\left(
\begin{array}{cc}
    T_{m-1} & \tau_m  \\
    0 & \la_m
\end{array}
\right).
$$
We have
$$
T_m T_m^* =
\left(
\begin{array}{cc}
    T_{m-1} T_{m-1}^* + \tau_m \tau_m^* & \ovr{\la}_m \tau_m  \\
    \la_m \tau_m^* & |\la_m|^2
\end{array}
\right),
$$
so that assuming that $T_m T_m^* = I - v v^*$ yields
\begin{empheq}[left=\empheqlbrace]{align}
\label{rec_1}  T_{m-1} T_{m-1}^*   &= I_{m-1} - v^{(m-1)} v^{(m-1)*} - \tau_m \tau_m^* \\
\label{rec_2}  \ovr{\la_m} \tau_m &= - \overline{v_m} v^{(m-1)} \\
\label{rec_3} |\la_m|^2 &= 1 - |v_m|^2.
\end{empheq}
Equation (\ref{rec_2}) together with \eqref{rec_3} yields the claim for $|v_m|^2$ and the last column $\tau_m$. Plugging these values in \eqref{rec_1} gives
\begin{equation}
T_{m-1} T_{m-1}^*   
= I_{m-1} - \left(1 + \frac{|v_m|^2}{|\la_m|^2}\right) v^{(m-1)} v^{(m-1)*}
= I_{m-1} - \frac{1}{|\la_m|^2} v^{(m-1)} v^{(m-1)*},
\end{equation}
which is the induction hypotheses with the vector $\frac{1}{\la_m} v^{(m-1)}$.
\end{proof}

The next lemma, established in a similar way, describes the rows of $T$ under a symmetric assumption.

\begin{lemma}
If $T$ is an $m \times m$ upper-triangular matrix with eigenvalues $ \la_1= T_{11}$, $\dots$, $\la_m = T_{mm} $, and  $w$ a row vector such that $T^* T = I_m - w^* w$, then for all $k=1, \dots, m$,
\begin{equation}
    |w_k|^2 = (1-|\la_k|^2) \prod_{l < k} |\la_l|^2
\end{equation}
and the $k$th row of $T$ is given by $(0, \la_k, \nu_k)$, where
\begin{equation}\label{rows_of_T}
\nu_k = - \frac{ \ovr{w_k} }{ \ovr{\la_k} \prod_{l < k} |\la_l|^2 } w^{[k+1]}
= \frac{|\la_k|^2-1}{\ovr{\la_k}} \frac{1}{w_k} w^{[k+1]}.
\end{equation}
\end{lemma}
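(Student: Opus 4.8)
The plan is to obtain this lemma from the preceding one by a reflection, turning statements about \emph{columns} of an upper-triangular $T$ with $TT^* = I-vv^*$ into statements about \emph{rows} of an upper-triangular $T$ with $T^*T = I-w^*w$. Let $J$ be the $m\times m$ exchange matrix, so that $J$ is real with $J=J^*=J^{-1}$, and conjugation $A\mapsto JAJ$ reverses the order of both the rows and the columns of $A$; in particular it carries lower-triangular matrices to upper-triangular ones. Put $\widetilde T := JT^*J$. Since $T$ is upper-triangular, $T^*$ is lower-triangular, so $\widetilde T$ is upper-triangular, with diagonal entries $\widetilde T_{kk} = \overline{\la_{m+1-k}}$; and using $J^2 = I_m$,
\begin{equation}
\widetilde T\,\widetilde T^{\,*} = JT^*J\cdot JTJ = J(T^*T)J = J(I_m - w^*w)J = I_m - (Jw^*)(wJ).
\end{equation}
Writing $\widetilde v := Jw^*$, a column vector, one has $(Jw^*)^* = wJ = \widetilde v^{\,*}$, hence $\widetilde T\,\widetilde T^{\,*} = I_m - \widetilde v\,\widetilde v^{\,*}$, so that $(\widetilde T,\widetilde v)$ satisfies the hypotheses of the preceding lemma.

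Next I would apply that lemma to $(\widetilde T,\widetilde v)$, whose $k$-th diagonal entry is $\mu_k := \overline{\la_{m+1-k}}$ and whose vector has $k$-th coordinate $\widetilde v_k = \overline{w_{m+1-k}}$, and then translate through the substitution $j = m+1-k$. The norm identity $|\widetilde v_k|^2 = (1-|\mu_k|^2)\prod_{l>k}|\mu_l|^2$ becomes $|w_j|^2 = (1-|\la_j|^2)\prod_{l<j}|\la_l|^2$, because $\{\, m+1-l : k<l\le m \,\} = \{1,\dots,j-1\}$. For the rows, one checks directly from $(\widetilde T)_{ik} = (T^*)_{m+1-i,\,m+1-k} = \overline{T_{m+1-k,\,m+1-i}}$ that the $k$-th column of $\widetilde T$ is precisely the entrywise conjugate of the reversal of the $(m+1-k)$-th row of $T$. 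Applying this involutive conjugate-reversal map to the formula for that column — namely $(\widetilde\tau_k,\mu_k,0)$ with $\widetilde\tau_k = \tfrac{|\mu_k|^2-1}{\overline{\mu_k}}\tfrac{1}{\widetilde v_k}\widetilde v^{(k-1)}$ — converts it into the asserted form $(0,\la_j,\nu_j)$ for the $j$-th row of $T$, with $\nu_j = \tfrac{|\la_j|^2-1}{\overline{\la_j}}\tfrac{1}{w_j}w^{[j+1]}$. Finally, substituting the norm formula for $|w_k|^2$ shows that this last expression equals $-\tfrac{\overline{w_k}}{\overline{\la_k}\prod_{l<k}|\la_l|^2}w^{[k+1]}$, so both displayed forms of $\nu_k$ hold.

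An alternative — presumably the ``similar way'' alluded to in the text — is a direct induction on $m$, peeling off the \emph{first} row and column rather than the last: writing $T$ in block form with top-left entry $\la_1$, first-row tail $\nu_1$, and lower-right $(m-1)\times(m-1)$ block $T'$, and expanding $T^*T = I_m - w^*w$ blockwise, one reads off $|w_1|^2 = 1-|\la_1|^2$, then $\nu_1 = -\overline{w_1}\,w^{[2]}/\overline{\la_1}$ from the off-diagonal block, and then $(T')^*T' = I_{m-1} - \tfrac{1}{|\la_1|^2}(w^{[2]})^*w^{[2]}$ from the lower-right block, which is exactly the inductive hypothesis for $T'$ with the rescaled vector $\tfrac{1}{\la_1}w^{[2]}$; undoing the rescaling gives the stated formulas. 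In either route there is no genuine difficulty — the argument is entirely parallel to that of the first lemma — and the only points needing care are the index bookkeeping of the reflection (respectively, of the eigenvalue relabeling in the submatrix), together with the tacit assumption that the $\la_k$ are nonzero, so that the various divisions are legitimate; this holds almost surely in the intended application to $\TUE(N,1)$.
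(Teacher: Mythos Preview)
Your proposal is correct. Your \emph{primary} route---reducing to the first lemma via the reflection $\widetilde T = JT^*J$ with the exchange matrix $J$---is a genuinely different argument from the paper's. The paper instead takes exactly the route you sketch as your ``alternative'': a direct induction on $m$ that peels off the first row and column, reads off $|w_1|^2=1-|\la_1|^2$ and $\nu_1$ from the block expansion of $T^*T=I-w^*w$, and obtains $T_{m-1}^*T_{m-1}=I_{m-1}-\tfrac{1}{|\la_1|^2}(w^{[2]})^*w^{[2]}$ as the inductive step.

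The reflection argument buys you economy: once the first lemma is in hand, the second becomes a corollary with no new computation, only index bookkeeping under $k\mapsto m+1-k$ and complex conjugation. The paper's direct induction is more self-contained and keeps the two lemmata logically independent, at the cost of repeating essentially the same recursion. Your check that the conjugate-reversal of the $k$th column of $\widetilde T$ is the $(m{+}1{-}k)$th row of $T$, and your translation of $\widetilde v^{(k-1)}$ into $w^{[j+1]}$, are both correct; the caveat about nonzero $\la_k$ applies equally to both approaches.
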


\begin{proof}
We proceed by induction on $m$; the statement holds trivially when $m=1$. Let us assume it is true for matrices of size $m-1$, and consider the following matrix of size $m$:
$$
T_m=
\left(
\begin{array}{cc}
    \la_1 & \nu_1  \\
    0 & T_{m-1}
\end{array}
\right).
$$
We have
$$
T_m^* T_m =
\left(
\begin{array}{cc}
    |\la_1|^2 & \ovr{\la}_1 \nu_1  \\
    \la_1 \nu_1^* & T_{m-1}^* T_{m-1} + \nu_1^* \nu_1
\end{array}
\right),
$$
so that assuming that $T_m^* T_m = I - w^* w$ yields
\begin{empheq}[left=\empheqlbrace]{align}
\label{rec_1_w}  T_{m-1}^* T_{m-1} &= I_{m-1} - w^{[2]*} w^{[2]} - \nu_1^* \nu_1 \\
\label{rec_2_w}  \ovr{\la_1} \nu_1 &= - \ovr{w_1} w^{[2]} \\
\label{rec_3_w} |\la_1|^2 &= 1 - |w_1|^2.
\end{empheq}
Equation (\ref{rec_2_w}) together with \eqref{rec_3_w} yields the claim for $|v_1|^2$ and the first column $\nu_1$. Plugging these values in \eqref{rec_1_w} gives
\begin{equation}
T_{m-1} T_{m-1}^*   
= I_{m-1} - \left(1 + \frac{|w_1|^2}{|\la_1|^2}\right) w^{[2]} w^{[2]*}
= I_{m-1} - \frac{1}{|\la_1|^2} w^{[2]*} w^{[2]},
\end{equation}
which is the induction hypotheses with the vector $\frac{1}{\la_1} w^{[2]}$.
\end{proof}

\begin{proposition}\label{prop:eigenvectors_entries}
Let $T$ be an $N\times N$ upper-triangular matrix with distinct eigenvalues $\la_1, \dots, \la_N$ and such that $TT^* = I_N - v v^*$. The left and right eigenvectors of $T$ are given by:
\begin{equation}\label{left_right_eigenvectors}
L_i = (0, \dots , 0, l_{ii}=1, \dots l_{iN})
\qquad
\&
\qquad 
R_j = (r_{j1}, \dots, r_{jj}=1, 0, \dots, 0)^T
\end{equation}
where, for $i<j$,
\begin{equation}\label{left_right_eigenvectors_entries}
l_{ij} =  
\frac{ v_i }{ v_j }
\frac{ |\la_j|^2 -1 }{ \ovr{\la_j} \la_i -1 }
\prod_{l=i+1}^j
\frac{ \ovr{\la_{l}} \la_i - 1 }
{ \ovr{\la_l} (\la_i - \la_l) }
\qquad
\&
\qquad 
r_{ji} 
=
\frac{ \ovr{v_j} }{ \ovr{v_i} }
\frac{|\la_i|^2-1 }{ \ovr{\la_i} \la_j-1 }
\prod_{l=i+1}^j \la_l
\prod_{l=i}^{j-1} \frac{ \ovr{\la_l}\la_j - 1 }{ \la_j - \la_{l} }
.
\end{equation}
\end{proposition}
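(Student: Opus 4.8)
The plan is to exploit the semiseparable structure of $T$ revealed by the two preceding lemmas: since $I-TT^*$ has rank one, the off-diagonal part of $T$ factorizes, and this will make the eigenvector recursions collapse into telescoping products.

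First I would locate the supports. Because $T$ is upper-triangular with pairwise distinct diagonal entries $\la_1,\dots,\la_N$, reading the equation $TR_j=\la_jR_j$ from the bottom row upwards forces $r_{jk}=0$ for $k>j$ while leaving $r_{jj}$ free, so we normalize $r_{jj}=1$; symmetrically $L_iT=\la_iL_i$ forces $l_{ik}=0$ for $k<i$ and we set $l_{ii}=1$. This already gives the shape \eqref{left_right_eigenvectors}, and the family is then bi-orthogonal (for $i>j$ the supports are disjoint; for $i=j$ the only common coordinate is $i$, contributing $1$; the cases $i<j$ follow from the general orthogonality of left and right eigenvectors attached to distinct eigenvalues).

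Next I would substitute the explicit entries of $T$. By \eqref{columns_of_T} the strictly upper-triangular part factorizes as $T_{kl}=\frac{|\la_l|^2-1}{\ovr{\la_l}}\,\frac{v_k}{v_l}$ for $k<l$, a function of the row index times a function of the column index, while $T_{ll}=\la_l$. The $k$th scalar equation of $L_iT=\la_iL_i$ with $k>i$ reads $(\la_i-\la_k)\,l_{ik}=\sum_{l=i}^{k-1}l_{il}\,T_{lk}$; introducing the partial sum $P_k:=\sum_{l=i}^{k}v_l\,l_{il}$ and using $v_k\cdot\frac{|\la_k|^2-1}{\ovr{\la_k}v_k}=\la_k-\frac{1}{\ovr{\la_k}}$, this collapses to the one-step recursion $P_k=P_{k-1}\,\frac{\ovr{\la_k}\la_i-1}{\ovr{\la_k}(\la_i-\la_k)}$ with $P_i=v_i$; telescoping and dividing back out reproduces the stated formula for $l_{ij}$ (up to rewriting the prefactor by extracting the $l=j$ factor of the product). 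For the right eigenvectors I would run the mirror computation — either directly from the columns of $T$, or from the rows supplied by the companion lemma, which applies since $TT^*=I-vv^*$ forces $T^*T=I-w^*w$ for some vector $w$, unique up to a phase, with $|w|^2=|v|^2$ — obtaining $r_{ji}$ once more as a telescoping product.

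The real work is the final identification of $r_{ji}$ with the displayed expression: the latter mixes entries of $v$ with products $\prod_l\la_l$ and $\prod_l(1/\ovr{\la_l})$, and matching it requires substituting the explicit value $|v_k|^2=(1-|\la_k|^2)\prod_{l>k}|\la_l|^2$ from the first lemma in order to trade the raw ratio $v_k/v_j$ (equivalently $w_j/w_k$) for the eigenvalue factors. I expect no conceptual obstacle beyond carefully tracking the index ranges in the telescoping products; once that is done, both identities in \eqref{left_right_eigenvectors_entries} drop out.
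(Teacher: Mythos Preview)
Your proposal is correct and follows essentially the same route as the paper. The paper also uses the rank-one structure of the columns \eqref{columns_of_T} to reduce the eigenvector recursion $(\la_i-\la_k)l_{ik}=L_i^{(k-1)}\tau_k$ to a telescoping product; your auxiliary sum $P_k=\sum_{l\le k}v_l\,l_{il}$ is precisely the quantity $L_i^{(k)}v^{(k)}$ that the paper manipulates in its ``two-steps recurrence'' (the paper tracks $l_{ik}$ and eliminates $L_i^{(k-1)}v^{(k-1)}$ via the previous step, whereas you track $P_k$ directly---a marginally cleaner bookkeeping of the same identity), and for $R_j$ the paper likewise passes through the row vector $w$ and relates it back to $v$, as you anticipated.
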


\begin{proof}
Eigenvectors can be chosen up to a multiplicative constant, in such a way that they form a bi-orthogonal family. For a triangular matrix $T$, this is achieved by the diagonal conditions $l_{ii}=1, r_{jj}=1$. The other coefficients are obtained by a straightforward recursion: for $i<j$,
$$
(L_i)_j = \frac{1}{\la_i - \la_j} L_i^{(j-1)} \tau_j
\qquad \& \qquad
(R_j)_i = \frac{1}{\la_j - \la_i} \nu_i R_{j}^{[i+1]}.
$$
We thus have, for $L_i$,
\begin{equation}\label{Li_first_step}
l_{i,i+1} 
= \frac{1}{\la_i - \la_{i+1}} L_i^{(i)} \tau_{i+1} 
= \frac{ T_{i,i+1} }{\la_i - \la_{i+1}}
= \frac{ |\la_{i+1}|^2-1 }{ \ovr{\la_{i+1}} (\la_i - \la_{i+1}) } \frac{ v_{i} }{ v_{i+1} },
\end{equation}
and then, with a two-steps recurrence using \eqref{columns_of_T},
\begin{align*}
l_{i,k+1} 
& = \frac{1}{\la_i - \la_{k+1}} L_i^{(k)} \tau_{k+1}  = \frac{ |\la_{k+1}|^2-1 }{\ovr{\la_{k+1}} (\la_i - \la_{k+1})} \frac{1}{ v_{k+1}}
\left(  L_i^{(k-1)} v^{(k-1)} + l_{ik} v_k \right) \\
& = \frac{ |\la_{k+1}|^2-1 }{\ovr{\la_{k+1}} (\la_i - \la_{k+1})} \frac{1}{ v_{k+1}}
\left(  \frac{ \ovr{\la_k } (\la_i - \la_{k}) v_k }{ |\la_k|^2-1 } + v_k \right) l_{ik} \\
& = \frac{ |\la_{k+1}|^2-1 }{ |\la_k|^2-1 }
\times
\frac{ v_k }{ v_{k+1}}
\times
\frac{ \ovr{ \la_k } \la_i -1 }{ 
\ovr{\la_{k+1}} (\la_i - \la_{k+1})} 
\times l_{ik}. 
\end{align*}
The result follows; likewise for $R_j$:
\begin{equation}\label{Rj_first_step}
r_{j,j-1} 
= \frac{1}{\la_j - \la_{j-1}} \nu_{j-1} R_j^{[j]} 
= \frac{ T_{j-1,j} }{\la_j - \la_{j-1}}
= \frac{ |\la_j|^2-1 }{ \ovr{\la_j} (\la_j - \la_{j-1}) } \frac{ v_{j-1} }{ v_j },
\end{equation} 
and then with another two-steps recursion, and by definition of $w$,
\begin{align*}
r_{j,k-1} 
& = \frac{1}{\la_j - \la_{k-1}} \nu_{k-1} R_j^{[k]}
= \frac{v_{k-1}}{\la_j - \la_{k-1}} w^{[k]} R_j^{[k]} \\
& = \frac{v_{k-1}}{\la_j - \la_{k-1}} \left( w^{[k+1]} R_j^{[k+1]} + w_k r_{jk} \right) \\
& = \frac{v_{k-1}}{\la_j - \la_{k-1}} \left( \frac{\la_j - \la_k}{v_k} r_{jk} + w_k r_{jk} \right) \\
& = \frac{ v_{k-1} }{ v_k } \times \frac{ \la_j \ovr{\la_k} -1 }{ \ovr{\la_k} (\la_j - \la_{k-1}) } \times r_{jk}
\end{align*}
and the result follows for $R_j$.
\end{proof}

\begin{definition}
We define
\begin{equation}
    \pi_{j+} = \prod_{l > j} \frac{ \la_j \ovr{\la_l} -1 }
    { \la_j - \la_l },
    \qquad
    \pi_{j-} = \prod_{l < j} \frac{ \la_j \ovr{\la_l} -1 }
    { \la_j - \la_l },
    \qquad
    \pi_{j} = \pi_{j+} \pi_{j-} 
    = \prod_{l \neq j} \frac{ \la_j \ovr{\la_l} - 1}
    { \la_j - \la_l }.
\end{equation}
\end{definition}

\begin{proposition}\label{scalar_products}
In the same context as above, the scalar products between left (resp. right) eigenvectors of $T$ are given by the formulas
\be\label{LiLj}
\langle L_i | L_j \rangle 
= 
\frac{v_i}{v_j}
\frac{|\la_j|^2 - 1}{ \la_i \ovr{\la_j} - 1 }
\left( \prod_{l=i \wedge j +1}^{i \vee j} \frac{1}{\ovr{\la_l}} \right)
\pi_{i+} \ovr{\pi_{j+}}, 
\ee

\be\label{RjRi}
\langle R_j | R_i \rangle 
= 
\frac{ v_j }{ v_i }
\frac{|\la_i|^2 - 1}{ \la_i \ovr{\la_j} - 1 }
\left( \prod_{l= i \wedge j+1}^{i \vee j} \ovr{\la_l} \right)
\pi_{i-} \ovr{\pi_{j-}}.
\ee
\end{proposition}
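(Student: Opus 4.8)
The plan is to avoid evaluating the scalar products entry by entry, and instead to exploit the two rank-one identities $TT^* = I - vv^*$ and $T^*T = I - w^*w$ together with the eigenvalue equations. The crux is a short ``sandwich'' computation. Since $L_jT = \la_jL_j$, taking conjugate transposes gives $T^*L_j^\dagger = \ovr{\la_j}L_j^\dagger$, where $L_j^\dagger$ is the column vector with entries $\ovr{(L_j)_k}$. Evaluating the scalar $L_i(TT^*)L_j^\dagger$ in two ways — as $(L_iT)(T^*L_j^\dagger) = \la_i\ovr{\la_j}\langle L_i|L_j\rangle$, and as $L_i(I-vv^*)L_j^\dagger = \langle L_i|L_j\rangle - (L_iv)\,\ovr{(L_jv)}$ — yields
\[
(1-\la_i\ovr{\la_j})\,\langle L_i|L_j\rangle \;=\; (L_iv)\,\ovr{(L_jv)},
\]
and the mirror computation with $T^*T = I - w^*w$, using $TR_j = \la_jR_j$ and hence $R_j^\dagger T^* = \ovr{\la_j}R_j^\dagger$, gives $(1-\la_i\ovr{\la_j})\,\langle R_j|R_i\rangle = \ovr{(wR_j)}\,(wR_i)$. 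This reduces both scalar products to the one-dimensional ``boundary'' quantities $L_iv = \sum_k (L_i)_k v_k$ and $wR_j = \sum_k w_k (R_j)_k$ (the signs match \eqref{LiLj}, \eqref{RjRi} after writing $1-\la_i\ovr{\la_j}$ and $1-|\la_j|^2$ with the opposite sign).

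The second step is to compute these boundary quantities, which are essentially already implicit in the proof of Proposition \ref{prop:eigenvectors_entries}. Writing $\sigma_k := \sum_{m=i}^{k} l_{im}v_m$, the two-step recursion established there for the entries $l_{i,k+1}$ is exactly
\[
\sigma_{k+1} \;=\; \sigma_k\left(1 + \frac{|\la_{k+1}|^2-1}{\ovr{\la_{k+1}}(\la_i-\la_{k+1})}\right) \;=\; \sigma_k\,\frac{\la_i\ovr{\la_{k+1}}-1}{\ovr{\la_{k+1}}(\la_i-\la_{k+1})},
\]
with $\sigma_i = v_i$; telescoping over $k = i+1, \dots, N$ gives $L_iv = v_i\,\pi_{i+}\prod_{l>i}\ovr{\la_l}^{-1}$, and the symmetric argument using the rows of $T$ gives $wR_j = w_j\,\pi_{j-}\prod_{l<j}\ovr{\la_l}^{-1}$.

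The last step is to substitute these into the two identities and simplify. Writing $v_i\ovr{v_j} = \tfrac{v_i}{v_j}|v_j|^2$ and using $|v_j|^2 = (1-|\la_j|^2)\prod_{l>j}|\la_l|^2$ (and the analogue for $|w_k|^2$), the products of moduli combine with the $\prod_{l>i}\ovr{\la_l}^{-1}$ and $\prod_{l>j}\ovr{\la_l}^{-1}$ factors and collapse to a single product over the coordinates common to the two eigenvectors' supports, i.e. over $l\in\{i\wedge j+1,\dots,i\vee j\}$, producing \eqref{LiLj}, and likewise \eqref{RjRi}. I expect the main obstacle to be precisely this bookkeeping: the cases $i<j$ and $i>j$ must be tracked separately, since $L_i$ is supported on the ``upper'' coordinates while $R_j$ lives on the ``lower'' ones, so the overlapping index set changes shape; and on the $R$-side one must also pass from the $w$-expression delivered by the sandwich identity to the $v$-expression in \eqref{RjRi} via the relation between the norms $|v_k|^2$ and $|w_k|^2$. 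The sandwich identity itself is immediate; essentially all of the care is in the indices.
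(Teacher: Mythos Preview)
Your approach is correct and is a genuinely cleaner route than the paper's. The paper computes the partial inner products $\langle L_i^{(k)}\mid L_j^{(k)}\rangle$ by an induction on $k$: at each step it applies the truncated rank-one identity $T^{(kk)}T^{(kk)*}=I-v^{(k)}v^{(k)*}$ to express $l_{i,k+1}\ovr{l_{j,k+1}}$ in terms of $L_i^{(k)}L_j^{(k)*}$, obtains a multiplicative recursion, and initializes at $k=j$ using the explicit entry $l_{ij}$ from Proposition~\ref{prop:eigenvectors_entries}. You instead apply the rank-one identity \emph{once} at the full level $N$, which immediately factorizes $(1-\la_i\ovr{\la_j})\langle L_i\mid L_j\rangle=(L_iv)\,\ovr{(L_jv)}$; the two-index recursion of the paper then collapses to a single one-index telescoping for $L_iv$. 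This makes the product structure $\pi_{i+}\ovr{\pi_{j+}}$ visible from the outset rather than emerging at the end, and avoids carrying the initial value $l_{ij}$ through the induction. The price is exactly what you identify: on the $R$-side the sandwich identity naturally produces $w$, so one must convert $\ovr{w_j}w_i$ back to the $v$-variables via the relation $|w_k|^2/|v_k|^2=\prod_{l<k}|\la_l|^2\big/\prod_{l>k}|\la_l|^2$ (and track that only the ratio matters, so the undetermined phases cancel). The paper's inductive proof and your global factorization are equivalent in content; yours is shorter and more transparent.
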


\begin{proof}
We perform both computations assuming $i<j$; the complementary statement follows by sesquilinearity. First, note that for any $k \geq i$,
$T$ being triangular, $L_i^{(k)}$ is a left-eigenvector of the smaller matrix $T^{(kk)}$ with eigenvalue $\la_i$, so that if $k \geq j >i$,
\be
L_i^{(k)} v^{(k)} v^{(k)*} L_j^{(k)*} = L_i^{(k)} \left(I - T^{(kk)} T^{(kk)*} \right) L_j^{(k)*}
= (1 - \la_i \ovr{\la}_j ) L_i^{(k)} L_j^{(k)*}.
\ee
Using \eqref{columns_of_T} and \eqref{Li_first_step} for $L_i$ and $L_j$ gives, still for $k\geq j>i$
\begin{align*}
l_{i,k+1} \ovr{l_{j,k+1}}
& =
\frac{ 1-|\la_{k+1}|^2 }{(\la_i - \la_{k+1})(\overline{\la_j - \la_{k+1}})   |\la_{k+1}|^2 |v_{k+1}|^2 }
L_i^{(k)} v^{(k)} v^{(k)*} L_j^{(k)*} \\
& =
\frac{ 1 }{(\la_i - \la_{k+1})(\overline{\la_j - \la_{k+1}})  \prod_{l=k+1}^N |\la_{l}|^2 }
L_i^{(k)} v^{(k)} v^{(k)*} L_j^{(k)*} \\
& = \frac{ 1 - \la_i \ovr{\la}_j }{(\la_i - \la_{k+1})(\overline{\la_j - \la_{k+1}})  \prod_{l=k+1}^N |\la_{l}|^2 }
L_i^{(k)} L_j^{(k)*}
\end{align*}

\begin{align*}
    \langle L_i^{(k+1)} | L_j^{(k+1)} \rangle & = L_i^{(k)} L_j^{(k)*} + l_{i,k+1} \ovr{l_{j,k+1}}
    =  L_i^{(k)} L_j^{(k)*} \left( 1 + \frac{ (1-|\la_{k+1}|^2)  (1 - \la_i \ovr{\la}_j ) }{ (\la_i - \la_{k+1}) ( \ovr{\la_j - \la_{k+1}}) } \right) \\
    & =
    L_i^{(k)} L_j^{(k)*}
    \frac{ (1-\la_i \ovr{\la_{k+1}})(1-\ovr{\la_j} \la_{k+1}) }{ (\la_i - \la_{k+1}) ( \ovr{\la_j - \la_{k+1}}) }
\end{align*}

It is straightforward to check that the initial case for $k=j$, with, according to Proposition \ref{prop:eigenvectors_entries}
\be
\langle L_i^{(j)} | L_j^{(j)} \rangle = l_{ij} = \frac{ v_i }{ v_j }
\frac{ |\la_j|^2 -1 }{ \ovr{\la_j} \la_i -1 }
\prod_{l=i+1}^j
\frac{ \ovr{\la_{l}} \la_i - 1 }
{ \ovr{\la_l} (\la_i - \la_l) }
\ee

For the right eigenvectors, we do a backward recursion, starting with
\be
\langle R_j^{[i]} \ | \ R_i^{[i]} \rangle
= \ovr{r_{ji}} 
= \frac{v_j}{v_i} \frac{|\la_i|^2-1}{\ovr{\la_j} \la_i -1} \left( \prod_{l=i+1}^j \ovr{\la_l} \right)
\prod_{l=i}^{j-1} \frac{\la_l \ovr{\la_j} - 1}{\ovr{\la_j - \la_l}}
\ee
and then similarly to complete the $\pi_{i-}, \pi_{j-}$ terms.
\end{proof}

Theorem \ref{thm_overlaps_intro} follows, by concatenating such terms as \eqref{LiLj} and \eqref{RjRi}.

\begin{proof}[Proof of Theorem \ref{thm_overlaps_intro}]
If $T$ is a Schur form of the matrix $G$, then there are a column vector $v$, a row vector $w$ and a number $\al$ such that
\begin{equation}
\left(
    \begin{array}{cc}
    T    & v  \\
    w    & \al
    \end{array}
\right) \in U_{N+1},
\end{equation}
which in particular yields the relations
\begin{equation}
    TT^* = I_N - vv^* \qquad \& \qquad T^* T = I_N - w^* w,
\end{equation}
and moreover, we assume that the eigenvalues of $T$ are distinct; so that $T$ meets the requirements of the above theorems. Plugging equations \eqref{LiLj}, \eqref{RjRi} in the definition
\begin{equation}
    \Ov_{i_1 j_1 \dots i_q j_q} 
    := \langle L_{i_1} | L_{j_1} \rangle  \langle R_{j_1} | R_{i_2} \rangle \cdots  \langle L_{i_q} | L_{j_q} \rangle  \langle R_{j_q} | L_{i_1} \rangle
\end{equation}
yields the claim, as the terms $v_i$, $v_j$ as well as other terms cancel exactly, and each $\pi_{i+}$ (resp. $\ovr{\pi_{j_+}}$) has a corresponding $\pi_{i-}$ (resp. $\ovr{\pi_{j_-}}$). We find:
\begin{equation}\label{generalized_overlaps_formula_2}
\Ov_{i_1 j_1 \dots i_q j_q} = 
\prod_{l=1}^q \frac{(1-|\la_{i_l}|^2)(1-|\la_{j_l}|^2)}{(1-\la_{i_l}\ovr{\la_{j_{l}}})(1-\la_{i_{l+1}}\ovr{\la_{j_{l}}})}
 \pi_{i_l} \ovr{\pi_{j_l}},
\end{equation}
which is the claim.
\end{proof}

The expressions for $q=1$ follow directly, leading to expressions analogous to the result from \cite{FyodorovMehlig} for weakly non-Hermitian matrices. Indeed, diagonal overlaps are given by the product:
\begin{equation}\label{diag_overlap}
\Ov_{ii} = |\pi_i|^2
= \prod_{k \neq i} \left| \frac{ \la_i \ovr{\la_k} -1}{\la_i - \la_k} \right|^2
= \prod_{k \neq i} \left(1 + \frac{ ( 1 - |\la_k|^2) ( 1 - |\la_i|^2)}{|\la_i - \la_k|^2} \right),
\end{equation}
whereas off-diagonal overlaps are given by:
\begin{equation}
\Ov_{ij} = 
- \frac{ ( 1 - |\la_i|^2) ( 1 - |\la_j|^2) }{ |\la_i - \la_j|^2 }
 \prod_{k \neq i,j} \left(1 + \frac{ ( 1 - |\la_k|^2) ( 1 - \la_i \overline{\la_j})}{(\la_i - \la_k)(\overline{\la_j - \la_k})} \right) \qquad \forall i \neq j.
\end{equation}

\section{The expectation of diagonal overlaps}\label{computation_exp_sec}

\subsection{A computation for general potential}
In this section, we work with a general function $V=V_N: \R_+ \rightarrow \R \cup \{+\infty \}$, and define the generalized Gamma and Meijer functions as
\begin{equation}
\Gamma_V(\alpha) := \int_{\R_+} t^{\al - 1} e^{-V(t)} \dd t,
\qquad
G_{V} (k) = \Gamma_V(1) \cdots \Gamma_V(k).
\end{equation}
We also define the partial sums
\begin{equation}
e_V^{(m)}(X) := \sum_{k=0}^m \frac{X^k}{\Gamma_V (k+1)}.
\end{equation}
The method that follows applies to any determinantal point process with radial symmetry in $\C$, with joint density given by
\begin{equation}\label{Beta_ens_2}
\frac{1}{Z_N} \prod_{1 \leq i<j \leq N} |\la_i - \la_j|^2 e^{- \sum_{i=1}^N V(|\la_i|^2)} ,
\end{equation}
with respect to the Lebesgue measure $\dd m$ on $\mathbb{C}$, such that the integral quantities of interest are finite; in particular, we then have 
\be
Z_N = G_V(N).
\ee
As noted in \cite{DubachPowers}, Kostlan's theorem and related identities hold, as well as the following formula for product statistics conditionally on one eigenvalue:

\begin{proposition}\label{generalV_prostat} Conditionally on $\{ \la_1 = z_1 \}$, for any function $g$, the following identity holds:
$$
\E_{\la_1=z_1} \left( \prod_{i=2}^N g(\la_i) \right)
=
\frac{1}{Z_N^{(1)}}
\det \left( \int_{\mathbb{C}} z^{i-1} \overline{z}^{j-1} |z_1 - z|^2 g(z) e^{-V(|z|^2)} \dd m(z) \right)_{i,j=1}^{N-1}
$$
where
$$
Z_N^{(1)} = G_V(N) e_V^{(N-1)} (|z_1|^2).
$$
\end{proposition}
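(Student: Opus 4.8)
The plan is to compute the conditional expectation directly from the conditional density of $(\la_2,\dots,\la_N)$ given $\{\la_1=z_1\}$, reduce the resulting $(N-1)$-fold integral to a single determinant by Andréief's identity, and then evaluate the normalising determinant via a three-term recursion.

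First I would write down the conditional law. By exchangeability of the joint density \eqref{Beta_ens_2}, conditionally on $\{\la_1=z_1\}$ the vector $(\la_2,\dots,\la_N)$ has density on $\C^{N-1}$ proportional to $\prod_{2\le i<j\le N}|\la_i-\la_j|^2\prod_{i=2}^N|z_1-\la_i|^2 e^{-V(|\la_i|^2)}$, the factor $e^{-V(|z_1|^2)}$ cancelling between the joint density and the marginal of $\la_1$. Therefore $\E_{\la_1=z_1}(\prod_{i=2}^N g(\la_i))$ is the integral of this expression against $\prod_{i=2}^N g(\la_i)$ divided by its integral against $1$. Relabelling the $N-1$ variables and writing $\prod_{i<j}|\la_i-\la_j|^2 = |\det(\la_i^{\,j-1})_{i,j=1}^{N-1}|^2$, Andréief's (Heine's) identity turns each of these two integrals into $(N-1)!$ times a determinant of size $N-1$, the entries being the moments of the effective weight $z\mapsto |z_1-z|^2 g(z)e^{-V(|z|^2)}$, respectively $z\mapsto |z_1-z|^2 e^{-V(|z|^2)}$. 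The factorials cancel, which gives precisely the displayed formula, with $Z_N^{(1)}$ equal to the denominator determinant, i.e. the case $g\equiv 1$ — as it must be, since $\E_{\la_1=z_1}(1)=1$.

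It then remains to evaluate $Z_N^{(1)}$. Expanding $|z_1-z|^2 = |z_1|^2 - z_1\ovr{z} - \ovr{z_1}z + |z|^2$ and using the radial symmetry of $e^{-V(|z|^2)}\dd m(z)$, which makes the monomials orthogonal with $\int z^{a}\ovr{z}^{\,b}e^{-V(|z|^2)}\dd m(z) = \delta_{ab}\Gamma_V(a+1)$, one sees that the matrix $M_{ij} := \int z^{i-1}\ovr{z}^{\,j-1}|z_1-z|^2 e^{-V(|z|^2)}\dd m(z)$ is tridiagonal, with $M_{ii} = |z_1|^2\Gamma_V(i) + \Gamma_V(i+1)$, $M_{i,i+1} = -\ovr{z_1}\,\Gamma_V(i+1)$ and $M_{i+1,i} = -z_1\,\Gamma_V(i+1)$. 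Expanding the top-left $n\times n$ minor $D_n$ along its last row yields the recursion $D_n = (|z_1|^2\Gamma_V(n) + \Gamma_V(n+1))D_{n-1} - |z_1|^2\Gamma_V(n)^2 D_{n-2}$ with $D_0 = 1$ and $D_1 = |z_1|^2\Gamma_V(1) + \Gamma_V(2)$. A short induction, using $e_V^{(n)}(x) - e_V^{(n-1)}(x) = x^{n}/\Gamma_V(n+1)$ and $G_V(n+1) = \Gamma_V(n+1)G_V(n)$, shows $D_n = G_V(n+1)e_V^{(n)}(|z_1|^2)$; taking $n = N-1$ gives $Z_N^{(1)} = G_V(N)e_V^{(N-1)}(|z_1|^2)$.

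None of the steps is genuinely hard. The points that require a little care are the bookkeeping of the two cancellations (the $e^{-V(|z_1|^2)}$ factor and the $(N-1)!$) so that $Z_N^{(1)}$ comes out with exactly the stated normalisation, and getting the off-diagonal entries of $M$ right, since they produce the coefficient $|z_1|^2\Gamma_V(n)^2$ in the recursion. The evaluation of the tridiagonal determinant is the one genuine computation, but it is a routine induction once the recursion is in place.
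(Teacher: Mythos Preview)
Your proposal is correct and follows essentially the same route as the paper: Andr\'eief's identity applied to the conditioned density gives the determinantal formula, the radial symmetry makes the $g\equiv 1$ matrix tridiagonal with the entries you state, and the three-term recursion is solved to yield $D_n=G_V(n+1)e_V^{(n)}(|z_1|^2)$. The only cosmetic difference is that the paper solves the recursion by the substitution $D_k=G_V(k+1)\tilde D_k$ and telescoping $\tilde D_k-\tilde D_{k-1}$, whereas you verify the closed form directly by induction; these are the same computation.
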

This key fact could be justified in different ways; we give the following proof as an illustration of the tridiagonal determinant structure (also known as \textit{continuant}), that is also the key to the main result. 

\begin{proof} The determinant comes from Andr\'eief identity applied to the conditioned measure; it remains to compute the normalization constant by setting $g=1$. We find
$$
Z_N^{(1)}
= \det \left( f_{ij} \right)_{i,j=1}^{N-1} =: D_{N-1}
$$
where the determinants $D_N$ are Hermitian, tridiagonal and nested, with
\begin{align*}
f_{kk} & = |z_1|^2 \Gamma_V(k) + \Gamma_V(k+1) \\
f_{k+1,k} & = \overline{f_{k,k+1}} = - z_1 \Gamma_V(k+1).
\end{align*}
This gives the initial values $D_0=1$, $$ D_{1} = |z_1|^2 \Gamma_V(1) + \Gamma_V(2) = G_V(2) e_V^{(1)} (|z_1|^2), $$
and the induction
\begin{align*}
D_{k}
& = f_{kk} D_{k-1} - |f_{k,k-1}|^2 D_{k-2} \\
& = \left( |z_1|^2 \Gamma_V(k) + \Gamma_V(k+1) \right) D_{k-1} - |z_1|^2 \Gamma_V(k)^2 D_{k-2}
\end{align*}
whose solution is the formula provided for $Z_N^{(1)}$. One way to see this is to introduce the sequence $\tilde{D}$ such that $D_{k} = G_V(k+1) \tilde{D}_k$, which yields
$$
\tilde{D}_{k}
= \left( |z_1|^2 \frac{\Gamma_V(k)}{\Gamma_V(k+1)} + 1 \right) \tilde{D}_{k-1} - |z_1|^2 \frac{\Gamma_V(k)}{\Gamma_V(k+1)} \tilde{D}_{k-2}
$$
so that
$$
\tilde{D}_{k} - \tilde{D}_{k-1} 
= |z_1|^2 \frac{\Gamma_V(k)}{\Gamma_V(k+1)}  \left( \tilde{D}_{k-1} -  \tilde{D}_{k-2} \right)
= \dots =  \frac{|z_1|^{2k}}{\Gamma_V(k+1)},
$$
whence $\tilde{D}_k = e_V^{k} (|z_1|^2)$.
\end{proof}
We now apply Proposition \ref{generalV_prostat} to functions $g$ of a particular form.

\begin{proposition}\label{magic_trick} In the ensemble defined by (\ref{Beta_ens_2}), conditionally on $\{ \la_1 = z_1 \}$, for any parameter $\al$,
\be
\E_{\la_1=z_1} \left( 
\prod_{i=2}^N 
\left( 1 + \frac{(\al-1) |z_1|^2 + (\al^{-1} -1) |\la_k|^2}{|z_1 - \la_k |^2} \right) \right)
= \frac{e_V^{N-1} (\al^2 |z_1|^2)}{\al^{N-1} e_V^{N-1} (|z_1|^2)}.
\ee
In particular, for $\al= |z_1|^{-2}$ we find that
\be
\E_{\la_1=z_1} \left( 
\prod_{k=2}^N 
\left( 1 + \frac{(1 -  |z_1|^2)(1 -  |\la_k|^2)}{|z_1 - \la_k |^2} \right) \right)
= \frac{|z_1|^{2 (N-1)} e_V^{N-1} (|z_1|^{-2})}{ e_V^{N-1} (|z_1|^2)}.
\ee
\end{proposition}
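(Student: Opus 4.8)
The plan is to apply Proposition \ref{generalV_prostat} to the function
\[
g(z) = 1 + \frac{(\al-1)|z_1|^2 + (\al^{-1}-1)|z|^2}{|z_1-z|^2},
\]
the point being that the denominator $|z_1-z|^2$ of $g$ is precisely what cancels the factor $|z_1-z|^2$ appearing in Proposition \ref{generalV_prostat}. The first step is the elementary identity
\[
|z_1-z|^2\, g(z) = |z_1-z|^2 + (\al-1)|z_1|^2 + (\al^{-1}-1)|z|^2 = \al|z_1|^2 + \al^{-1}|z|^2 - z_1\ovr{z} - \ovr{z_1}z =: h(z),
\]
which is a genuine polynomial in $z$ and $\ovr{z}$ (for real $\al>0$ one recognises $h(z) = |\al z_1 - z|^2/\al$, which explains the choice of $g$, but this will not be needed). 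Substituting into Proposition \ref{generalV_prostat} rewrites the desired expectation as $\frac{1}{Z_N^{(1)}}\det\big(\int z^{i-1}\ovr{z}^{\,j-1} h(z)\, e^{-V(|z|^2)}\,\dd m(z)\big)_{i,j=1}^{N-1}$, a finite quantity.

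The second step is to evaluate this determinant. By radial symmetry of $e^{-V(|z|^2)}$, the only surviving monomials of $h$ are those with matched powers, so the matrix is Hermitian tridiagonal with $f_{k,k} = \al|z_1|^2\Gamma_V(k) + \al^{-1}\Gamma_V(k+1)$ and $f_{k+1,k} = \ovr{f_{k,k+1}} = -z_1\Gamma_V(k+1)$. This is precisely the continuant from the proof of Proposition \ref{generalV_prostat}, with the scalar $|\la_1|^2$ replaced by $\al^2|z_1|^2$ and an extra power of $\al$ to track: writing the leading principal minors as $D_k = \al^{-k}G_V(k+1)\wt{D}_k$ turns the three-term recursion $D_k = f_{k,k}D_{k-1} - |f_{k,k-1}|^2 D_{k-2}$ into
\[
\wt{D}_k - \wt{D}_{k-1} = \al^2|z_1|^2 \,\frac{\Gamma_V(k)}{\Gamma_V(k+1)}\,\big(\wt{D}_{k-1} - \wt{D}_{k-2}\big),
\]
the very recursion already solved there; hence $\wt{D}_k = e_V^{(k)}(\al^2|z_1|^2)$ and $D_{N-1} = \al^{-(N-1)}G_V(N)\,e_V^{(N-1)}(\al^2|z_1|^2)$. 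Dividing by $Z_N^{(1)} = G_V(N)\,e_V^{(N-1)}(|z_1|^2)$ yields the announced formula, and the special case follows by taking $\al = |z_1|^{-2}$, for which $(\al-1)|z_1|^2 + (\al^{-1}-1)|z|^2 = (1-|z_1|^2)(1-|z|^2)$, $\al^2|z_1|^2 = |z_1|^{-2}$ and $\al^{N-1} = |z_1|^{-2(N-1)}$.

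I do not anticipate a real obstacle: the argument is a transparent reuse of the continuant computation underlying Proposition \ref{generalV_prostat}. The two points demanding a little care are (i) verifying the cancellation that makes $h$ a polynomial — this is what makes the Andr\'eief/continuant machinery applicable and pins down the specific combination of $|z_1|^2$ and $|z|^2$ in the numerator of $g$ — and (ii) bookkeeping the powers of $\al$ when the scalar $|\la_1|^2$ is replaced by $\al^2|z_1|^2$ (note that the off-diagonal entries of the continuant are in fact unchanged, only the diagonal sees $\al$); convergence of the integrals is immediate since $h$ has degree two and the relevant $\Gamma_V(k)$ are finite by assumption.
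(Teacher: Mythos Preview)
Your proposal is correct and follows essentially the same route as the paper: apply Proposition~\ref{generalV_prostat}, observe that the resulting matrix is tridiagonal, and solve the continuant recursion. The only cosmetic difference is in the bookkeeping of~$\al$: the paper starts with generic parameters $A,B$, derives the recursion, and then identifies $\al\beta=1$ as the solvability condition, whereas you build the factor $\al^{-k}$ directly into the ansatz $D_k=\al^{-k}G_V(k+1)\wt D_k$ so as to land immediately on the already-solved recursion from Proposition~\ref{generalV_prostat}; this is arguably a touch cleaner but not a different argument.
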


\begin{proof}
Proposition \ref{generalV_prostat} implies that, for generic $A,B$,
\begin{align*}
\E_{\la_1=z_1} & \left( 
\prod_{k=2}^N 
\left( 1 + \frac{A + B |\la_k|^2}{|z_1 - \la_k |^2} \right) \right) \\
& =
\frac{1}{Z_N^{(1)}}
\det \left( \int_{\mathbb{C}} z^{i-1} \overline{z}^{j-1} (|z_1 - z|^2 + A + B |z|^2 ) e^{-V(|z|^2)} \dd m(z) \right)_{i,j=1}^{N-1} \\
& = \frac{1}{Z_N^{(1)}} D_{N-1}
\end{align*}
with initial terms 
\be
D_0=1, \quad
D_1 = (A+|z_1|^2) \Gamma_V(1) + (B+1) \Gamma_V(2),
\ee
and the recursion
\begin{align*}
D_{k} &
= f_{kk} D_{k-1} - |f_{k,k-1}|^2 D_{k-2} \\
& = \left( (A+|z_1|^2) \Gamma_V(k) + (B+1) \Gamma_V(k+1) \right) D_{k-1} - |z_1|^2 \Gamma_V(k)^2 D_{k-2}
\end{align*}
The formula obviously holds for $z_1=0$. If $z_1 \neq 0$, let us define $\al, \beta$ such that
$$
A+|z_1|^2 = \al |z_1|^2, \qquad
B+1=\beta,
$$
so that the recurrence is
\begin{align*}
D_{k} = \left( \al|z_1|^2 \Gamma_V(k) + \beta \Gamma_V(k+1) \right) D_{k-1} - |z_1|^2 \Gamma_V(k)^2 D_{k-2}.
\end{align*}
Now, if $ D_k = G_V(k+1) \tilde{D}_k$, we find
\begin{align*}
\tilde{D}_{k} = \left( \al |z_1|^2 \frac{\Gamma_V(k)}{\Gamma_V(k+1)} + \beta \right) \tilde{D}_{k-1} - |z_1|^2 \frac{\Gamma_V(k)}{\Gamma_V(k+1)} \tilde{D}_{k-2}
\end{align*}
so that
$$
\tilde{D}_k - \beta \tilde{D}_{k-1} = \al |z_1|^2 \frac{\Gamma_V(k)}{\Gamma_V(k+1)} \left( \tilde{D}_{k-1} - \frac{1}{\al} \tilde{D}_{k-2} \right).
$$
In order for this to have a simple solution, we assume $\al \beta =1$, which implies the condition $B= \al^{-1} -1$ in the statement. Under this assumption, we solve by a direct recurrence:
\begin{align}
\tilde{D}_k - \beta \tilde{D}_{k-1} &
= \al^{k-1} |z_1|^{2(k-1)} \frac{\Gamma_V(2)}{\Gamma_V(k+1)}
\left( \tilde{D}_{1} - \frac{1}{\al} \tilde{D}_{0} \right) \\
& = \al^{k-1} |z_1|^{2(k-1)} \frac{\Gamma_V(2)}{\Gamma_V(k+1)}
\left( \frac{\al |z_1|^2}{\Gamma_V (2)} + \frac{1}{\al \Gamma_V(1)} - \frac{1}{\al \Gamma_{V} (1)} \right) \\
& = \frac{ \al^{k} |z_1|^{2k}}{\Gamma_V(k+1)}
\end{align}
From which, using again $\al \beta=1$, one deduces that $\tilde{D}_k = \al^{-k} e_V^{(k)} (\al^2 | z_1|^2)$. The result follows.
\end{proof}

\subsection{Application to diagonal overlaps}

We now prove Theorem \ref{thm:expO11}, using the explicit form of the joint density from \cite{ZyczkowskiSommers} and the formula \eqref{diag_overlap} for diagonal overlaps that follows from Theorem \ref{thm_overlaps_intro}.


\begin{proof}[Proof of Theorem \ref{thm:expO11}]
We apply Proposition \ref{magic_trick} to $\TUE(N,1)$. In this case, $V(t) = 1$ on $[0,1)$ and $+\infty$ on $[1, +\infty)$. This gives the generalized Gamma function
\begin{equation}
\Gamma_V (\al) = \frac{1}{\al}
\end{equation}
and, for every $m$, we compute
\begin{align*}
e_V^{(m)} (X) = \sum_{k = 0}^m (k+1) X^{k} & = \partial_X \left( \frac{1-X^{m+2}}{1-X} \right) \\
& = \frac{ - (m+2) X^{m+1} (1-X) + 1 - X^{m+2} }{(1-X)^2},
\end{align*}
as well as
\be
X^{m} e_V^{(m)} \left( 1/X \right) = \frac{(m+2) (1-X) +  X^{m+2} -1 }{(1-X)^2}.
\ee
The expectation of the diagonal overlap is given by the following expression, obtained from the above with $m=N-1$, and evaluated at $X=|z_1|^2$:
\be
\frac{
X^{N-1} e_V^{(N-1)} \left( 1/X \right)
}{
e_V^{(N-1)} (X)
}
= \frac{(N+1) (1-X) +  X^{N+1} -1 }{ 1 - X^{N+1} - (N+1) X^{N} (1-X) },
\ee
the consequences follow by a direct computation in the relevant regimes.
\end{proof}


\section*{Acknowledgments}

\noindent The author acknowledges funding from the European Union's Horizon 2020 research and innovation programme under the Marie Sk{\l}odowska-Curie Grant Agreement No. 754411. His gratitude also goes to Jana Reker as well as to the anonymous referee for their careful proofreading and comments.

\begin{bibdiv}
\begin{biblist}

\bib{BelinschiNowakTarnowski}{article}{
  title={Squared eigenvalue condition numbers and eigenvector correlations from the single ring theorem},
  author={Belinschi, S.},
  author={Nowak, M.A.},
  author={Speicher, R.},
  author={Tarnowski, W.},
  journal={Journal of Physics A: Mathematical and Theoretical},
  volume={50},
  number={10},
  pages={105204},
  year={2017},
  publisher={IOP Publishing}
}

\bib{BourgadeDubach}{article}{
  title={The distribution of overlaps between eigenvectors of Ginibre matrices},
  author={Bourgade, P.},
  author={Dubach, G.},
  journal={Probability Theory and Related Fields},
  pages={397--464},
  volume={177},
  year={2020},
  publisher={Springer}
}

\bib{ChalkerMehlig}{article}{
  title={Eigenvector statistics in non-Hermitian random matrix ensembles},
  author={Chalker, J. T.},
  author={Mehlig, B.},
  journal={Physical review letters},
  volume={81},
  number={16},
  pages={3367},
  year={1998},
  publisher={APS}
}

\bib{CrawfordRosenthal}{article}{
  title={Eigenvector correlations in the complex Ginibre ensemble},
  author={Crawford, N.},
  author={Rosenthal, R.},
  journal={The Annals of Applied Probability},
  volume={32},
  number={4},
  pages={2706--2754},
  year={2022},
  publisher={Institute of Mathematical Statistics}
}

\bib{DubachPowers}{article}{
  title={Powers of Ginibre eigenvalues},
  author={Dubach, G.},
  journal={Electronic Journal of Probability},
  volume={23},
  pages={1--31},
  year={2018},
  publisher={Institute of Mathematical Statistics and Bernoulli Society}
}

\bib{DubachSpherical}{article}{
  title={On eigenvector statistics in the spherical and truncated unitary ensembles},
  author={Dubach, G.},
  journal={Electronic Journal of Probability},
  volume={26},
  pages={1-29},
  year={2021}
}

\bib{ForresterKrishnapur}{article}{
   author={Forrester, P. J.},
   author={Krishnapur, M.},
   title={Derivation of an eigenvalue probability density function relating
   to the Poincar\'{e} disk},
   journal={J. Phys. A},
   volume={42},
   date={2009},
   number={38},
   pages={385204, 10},
}

\bib{Fyodorov}{article}{
  title={On statistics of bi-orthogonal eigenvectors in real and complex Ginibre ensembles: combining partial Schur decomposition with supersymmetry},
  author={Fyodorov, Y. V.},
  journal={Communications in Mathematical Physics},
  volume={363},
  number={2},
  pages={579--603},
  year={2018},
  publisher={Springer}
}

\bib{FyodorovMehlig}{article}{
  title={Statistics of resonances and nonorthogonal eigenfunctions in a model for single-channel chaotic scattering},
  author={Fyodorov, Y. V.},
  author={Mehlig, B.},
  journal={Physical Review E},
  volume={66},
  number={4},
  year={2002},
  publisher={APS}
}

\bib{FyodorovTarnowski}{article}{
  title={Condition numbers for real eigenvalues in the real Elliptic Gaussian ensemble},
  author={Fyodorov, Y. V.},
  author={Tarnowski, W.},
  booktitle={Annales Henri Poincar{\'e}},
  volume={22},
  number={1},
  pages={309--330},
  year={2021},
  organization={Springer}
}

\bib{MehligChalker1}{article}{
  title={Eigenvector correlations in non-Hermitian random matrix ensembles},
  author={Mehlig, B.},
  author={Chalker, J. T.},
  journal={Annalen der Physik},
  volume={7},
  number={5-6},
  pages={427--436},
  year={1998},
  publisher={Wiley Online Library}
}

\bib{MehligChalker2}{article}{
  title={Statistical properties of eigenvectors in non-Hermitian Gaussian random matrix ensembles},
  author={Mehlig, B.},
  author={Chalker, J. T.},
  journal={Journal of Mathematical Physics},
  volume={41},
  number={5},
  pages={3233--3256},
  year={2000},
  publisher={American Institute of Physics}
}

\bib{NowakTarnowski}{article}{
  title={Probing non-orthogonality of eigenvectors in non-Hermitian matrix models: diagrammatic approach},
  author={Nowak, M. A.},
  author={Tarnowski, W.},
  journal={Journal of High Energy Physics},
  volume={2018},
  number={6},
  pages={1--34},
  year={2018},
  publisher={Springer}
}

\bib{ZyczkowskiSommers}{article}{
  title={Truncations of random unitary matrices},
  author={Zyczkowski, K.},
  author={Sommers, H.-J.},
  journal={Journal of Physics A: Mathematical and General},
  volume={33},
  number={10},
  pages={2045},
  year={2000},
  publisher={IOP Publishing}
}

\end{biblist}
\end{bibdiv}

\end{document}